\newtheorem{theorem}{Theorem}
\newtheorem{corollary}[theorem]{Corollary}
\newtheorem{proposition}[theorem]{Proposition}
\theoremstyle{definition}
\theoremstyle{remark}
\newtheorem{example}{Example}
\newcommand{\abs}[1]{\lvert#1\rvert}
\newcommand{\nm}[1]{\lVert#1\rVert}
\newcommand{\D}{\mathbb{D}}
\newcommand{\N}{\mathbb{N}}
\newcommand{\C}{\mathbb{C}}
\renewcommand{\phi}{\varphi}
\newcommand{\BMOA}{\rm BMOA}
\renewcommand{\H}{\mathcal{H}}
\begin{document}

\title[Factorization of solutions of linear differential equations]{Factorization of solutions of linear differential equations}

\author{Janne Gr\"ohn}
\address{Department of Physics and Mathematics, University of Eastern Finland\newline
\indent P.O. Box 111, FI-80101 Joensuu, Finland}
\email{janne.grohn@uef.fi}

\date{\today}

\subjclass[2020]{Primary 34M10; Secondary 30H30, 30H35}
\keywords{BMOA, Carleson measure; Hardy space; Linear differential equation; Riccati differential equation}

\begin{abstract}
  This paper supplements recents results on linear differential equations $f''+Af=0$, where the coefficient
  $A$ is analytic in the unit disc of the complex plane~$\C$. It is shown that, if $A$ is analytic and $|A(z)|^2(1-|z|^2)^3\, dm(z)$ is
  a~Carleson measure, then all non-trivial solutions of $f''+Af=0$ can be factorized as $f=Be^g$,
  where $B$ is a~Blaschke product whose zero-sequence $\Lambda$ is uniformly separated and where
  $g\in{\rm BMOA}$ satisfies the interpolation property
  $$g'(z_n) = -\frac{1}{2} \, \frac{B''(z_n)}{B'(z_n)}, \quad z_n\in\Lambda.$$
  Among other things, this factorization
  implies that all solutions of $f''+Af=0$ are functions in a~Hardy space and have no singular inner factors.

  Zero-free solutions play an~important role as their maximal growth is similar to the general case.  
  The study of zero-free solutions produces a~new result on Riccati differential equations.
\end{abstract}

\dedicatory{Dedicated to the memory of Christian Pommerenke}
\maketitle


\section{Introduction} 

Let $\H(\D)$ be the collection of analytic functions in the unit disc~$\D = \{ z\in\C : |z|<1\}$.
This study concerns the growth of solutions of
\begin{equation} \label{eq:de2}
f''+Af=0,
\end{equation}
where the coefficient $A\in\H(\D)$. Our aim is to find a~new factorization
for solutions of~\eqref{eq:de2} under an appropriate coefficient condition.
Such factorization allows us to improve and extend many recent results. More specific details
on improvements are given in Section~\ref{sec:results}.

To consider the growth of solutions $f$ of~\eqref{eq:de2}, we recall the following definitions.
For $0<p<\infty$
the Hardy space $H^p$  consists of those
functions in $\H(\D)$ for which
 \begin{equation*}
 \nm{f}_{H^p} = \lim_{r\to 1^-} \left( \frac{1}{2\pi} \int_0^{2\pi}
 \abs{f(r e^{i\theta})}^p \, d\theta \right)^{1/p}<\infty.
 \end{equation*}
A positive Borel measure $\mu$ on $\D$ is called a Carleson measure, if there exists a~positive constant 
$C$ such that
\begin{equation*}
\int_{\D} |f(z)|^p \, d\mu(z) \leq C \, \nm{f}_{H^p}^p, \quad f\in H^p;
\end{equation*}
or equivalently, if
\begin{equation*} 
\sup_{a\in\D} \, \int_\D |\varphi_a'(z)| \, d\mu(z) < \infty.
\end{equation*}
Here $\varphi_a(z) = (a-z)/(1-\overline{a} z)$ is a~conformal automorphism of $\D$
which coincides with its own inverse.
For more information on Carleson measures, we refer to \cite{D:2000}.
The Bloch space $\mathcal{B}$ consists of functions in $\H(\D)$ for which  $$\|f\|_{\mathcal{B}} = \sup_{z\in\D} \, |f'(z)|(1-|z|^2)<\infty,$$ while
the space $\BMOA$ consists of those functions in $H^2$
whose boundary values have bounded mean oscillation on $\partial\D$; or equivalently, of those analytic functions $f\in\H(\D)$
for which $|f'(z)|^2(1-|z|^2)\, dm(z)$ is a~Carleson measure. Here $d m(z)$ denotes the element of the Lebesgue area measure on~$\D$.
Denote
$$\nm{f}_{\rm BMOA}^2 = \sup_{a\in\D} \, \nm{f_a}_{H^2}^2,$$
where $f_a(z)=f(\varphi_a(z)) - f(a)$ for all $a,z\in\D$.

To consider the oscillation of solutions $f$ of~\eqref{eq:de2}, we recall the following definitions.
Let
$\varrho(z,w)=|z-w|/|1-\overline{z}{w}|$ denote the pseudo-hyperbolic distance between
the points $z,w\in\D$. The pseudo-hyperbolic disc of radius $0<\delta<1$, centered
at $z\in\D$, is given by $\Delta(z,\delta) = \{ w \in\D : \varrho(z,w) < \delta \}$.
The sequence $\Lambda\subset \D$ is called separated in the hyperbolic metric if there exists $0<\delta<1$ such that
$\varrho(z_n,z_k) >\delta$ for all $z_n,z_k\in\Lambda$ for which $z_n\neq z_k$.
Sequence $\Lambda$ is said to be uniformly separated if it is separated in the hyperbolic metric
and $\sum_{z_n\in\Lambda} (1-|z_n|)\delta_{z_n}$ is a~Carleson measure. Here $\delta_{z_n}$ is the Dirac measure with point mass at~$z_n$. 
If $B$ is a~Blaschke product whose zero sequence is $\Lambda$, then
$\Lambda$ is uniformly separated if and only if there
exists a~constant $C=C(B)>0$ such that
\begin{equation*} 
|B(z)| \geq C \varrho(z,\Lambda), \quad z\in\D;
\end{equation*}
see for example \cite[p.~217]{N:1986}.
Uniformly separated sequences $\Lambda$
satisfy the Blaschke condition $\sum_{z_n\in\Lambda} (1-|z_n|) <\infty$, while
there are separated sequences for which the Blaschke condition fails to be true \cite[pp.~214--215]{S:1993}.

To consider the growth of the coefficient $A$, we recall the following definitions.
The growth space $H^\infty_\alpha$ consists of those functions
in $\H(\D)$ for which
\begin{equation*}
\|A\|_{H^\infty_\alpha} = \sup_{z\in\D} \,  \abs{A(z)} (1-\abs{z}^2)^\alpha < \infty.
\end{equation*} 
We are especially interested in analytic coefficients for which
$|A(z)|^2(1-|z|^2)^3\, dm(z)$ is a~Carleson measure. Then
$A\in H^\infty_2$ by subharmonicity.


\section{Results} \label{sec:results}

In Section~\ref{subsec:main} we present our main results on solutions of~\eqref{eq:de2},
where $A\in\H(\D)$ and $|A(z)|^2(1-|z|^2)^3\, dm(z)$ is a~Carleson measure. The proofs of the main results
depend on intermediate results, which are considered in Section~\ref{subsec:inter} separately.
Zero-free solutions are discussed in detail in Section~\ref{subsec:zerofree}, which leads
us to prove a~related result on Riccati differential equations.


\subsection{The main results} \label{subsec:main}

If $A\in\H(\D)$ and $|A(z)|^2(1-|z|^2)^3\, dm(z)$ is a~Carleson measure, then
zero-sequences of all non-trivial solutions ($f\not\equiv 0$) of \eqref{eq:de2} are uniformly separated \cite[Corollary~3]{GN:2017}.
Conversely, if $\Lambda$ is any uniformly separated sequence, then there exists $A\in\H(\D)$ for which $|A(z)|^2(1-|z|^2)^3\, dm(z)$ is a~Carleson measure and for which~\eqref{eq:de2} admits a~non-trivial solution whose zero sequence is $\Lambda$ \cite[Corollary~2]{G:2017}.
These results offer a~complete description of zero-sequences under this coefficient condition.

If $A\in\H(\D)$ and $|A(z)|^2(1-|z|^2)^3\, dm(z)$ is a~Carleson measure, then
zero-free solutions $f$ of~\eqref{eq:de2} are of the form $f=e^g$, where $g\in {\rm BMOA}$. Conversely, for any $g\in{\rm BMOA}$ the function $f=e^g$ is a~solution of~\eqref{eq:de2} for some $A\in\H(\D)$ for which $|A(z)|^2(1-|z|^2)^3\, dm(z)$ is a~Carleson measure. See~\cite[Theorem~4(i)]{GNR:2018} for both of these assertions. These results offer a~complete description of zero-free solutions
under this coefficient condition.

The following factorization completes and unites the previous findings mentioned above.


\begin{theorem} \label{thm:factorization_main}
  If $A\in\H(\D)$ and $|A(z)|^2(1-|z|^2)^3\, dm(z)$ is a~Carleson measure, then all non-trivial solutions $f$ of~\eqref{eq:de2} can be
  factorized as $f=Be^g$, where $B$ is a~Blaschke
product whose zero-sequence $\Lambda$ is uniformly separated and where $g\in{\rm BMOA}$ satisfies the interpolation property
\begin{equation} \label{eq:iprop}
  g'(z_n) = -\frac{1}{2} \, \frac{B''(z_n)}{B'(z_n)}, \quad z_n\in\Lambda.
\end{equation}
    
  Conversely, if $B$ is any Blaschke product whose zero-sequence $\Lambda$ is uniformly separated
   and $g\in {\rm BMOA}$ is any function which satisfies the interpolation property~\eqref{eq:iprop}, then $f=Be^g$ is
  a~solution of~\eqref{eq:de2} for some $A\in\H(\D)$ for which $|A(z)|^2(1-|z|^2)^3\, dm(z)$ is a~Carleson measure.
\end{theorem}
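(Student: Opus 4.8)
The plan is to split the statement into its two implications and handle each by combining the known description of zero-free solutions (the $f=e^g$, $g\in\BMOA$ result of \cite{GNR:2018}) with the uniform separation of zero-sequences (\cite{GN:2017}) and the theory of interpolating Blaschke products. For the forward direction, let $f\not\equiv 0$ be a solution of~\eqref{eq:de2}. By \cite[Corollary~3]{GN:2017} its zero-sequence $\Lambda$ is uniformly separated, so the Blaschke product $B$ with zeros $\Lambda$ exists and is an interpolating Blaschke product; write $f=Bh$ where $h\in\H(\D)$ is zero-free. The first task is to show $h=e^g$ for some $g\in\BMOA$. The natural route is to plug $f=Bh$ into the differential equation: a direct computation gives $B'' h + 2B' h' + B h'' + A B h = 0$, so that $h$ satisfies $h'' + 2(B'/B)h' + (B''/B + A)h = 0$ away from $\Lambda$. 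Since $h$ is zero-free we may set $g=\log h$ (a single-valued analytic branch, as $\D$ is simply connected and $h$ is zero-free), and then $g$ solves a Riccati-type equation $g'' + (g')^2 + 2(B'/B)g' + (B''/B + A) = 0$. The key point is that, evaluating at a zero $z_n\in\Lambda$, the terms with the simple pole $B'/B$ and the double pole $B''/B$ must cancel against each other to keep $g$ analytic at $z_n$; matching the residues of $2(B'/B)g'$ and $B''/B$ at $z_n$ forces precisely the interpolation identity $g'(z_n)=-\tfrac12\,B''(z_n)/B'(z_n)$ in~\eqref{eq:iprop}. (Here one uses that near $z_n$, $B(z)=B'(z_n)(z-z_n)+\tfrac12 B''(z_n)(z-z_n)^2+\cdots$, so $B'/B$ has residue $1$ and $B''/B$ has a double pole with the stated coefficients.) It then remains to verify $g\in\BMOA$; the cleanest way is to reduce to the zero-free case already settled in \cite[Theorem~4(i)]{GNR:2018}. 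Concretely, one shows that $f/B$ is itself (up to the $\BMOA$-machinery) governed by a coefficient of the same Carleson type — but more directly, since $f\in H^p$ for some $p>0$ (which will follow once we know the factorization, or can be obtained from \cite{GN:2017}/\cite{GNR:2018}) and $B$ is an interpolating Blaschke product, $f/B$ has good boundary behavior; combined with the Riccati equation for $g$ and the coefficient condition on $A$, one deduces $|g'(z)|^2(1-|z|^2)\,dm(z)$ is a Carleson measure, i.e. $g\in\BMOA$. The main obstacle in the forward direction is exactly this last step: controlling $g'$ near $\Lambda$, where $B'/B$ blows up, and showing the resulting singularities are tame enough for the Carleson-measure estimate — this is where uniform separation of $\Lambda$ (equivalently $|B(z)|\geq C\varrho(z,\Lambda)$) and the interpolation property~\eqref{eq:iprop} do the work, allowing one to absorb the apparent poles of $2(B'/B)g' + B''/B$ at the points of $\Lambda$.

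For the converse direction, suppose $B$ is a Blaschke product with uniformly separated zero-sequence $\Lambda$ and $g\in\BMOA$ satisfies~\eqref{eq:iprop}. Set $f=Be^g$; this is a well-defined analytic function on $\D$ (the branch of $e^g$ being global), and we must produce a coefficient $A\in\H(\D)$ with $|A(z)|^2(1-|z|^2)^3\,dm(z)$ Carleson and with $f''+Af=0$. The only candidate is $A=-f''/f$, so the whole point is to show this quotient is analytic on all of $\D$ (a priori it could have poles at $\Lambda$) and satisfies the Carleson condition. Writing $f=Be^g$ and computing, one finds
\begin{equation*}
-\frac{f''}{f} = -\frac{B''}{B} - 2\,\frac{B'}{B}\,g' - g'' - (g')^2.
\end{equation*}
Near a point $z_n\in\Lambda$, the term $-B''/B$ has a pole of order two and $-2(B'/B)g'$ has a pole of order one; the order-two parts cancel automatically (the leading Laurent coefficient of $B''/B$ at $z_n$ is $0$, since $B$ has a simple zero there — more precisely $B''/B \sim B''(z_n)/(B'(z_n)(z-z_n))$ plus regular terms), and the residues cancel precisely because of~\eqref{eq:iprop}: the residue of $-2(B'/B)g'$ at $z_n$ is $-2g'(z_n)$, while that of $-B''/B$ is $-B''(z_n)/B'(z_n)$, and~\eqref{eq:iprop} says these sum to zero. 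Hence $A=-f''/f\in\H(\D)$. Finally one checks the Carleson condition: $g''+(g')^2$ contributes a Carleson measure because $g\in\BMOA$ (so $|g'(z)|^2(1-|z|^2)\,dm(z)$ is Carleson, and $g''$ is controlled via $(1-|z|^2)|g''|\lesssim \|g\|_{\mathcal B}$-type estimates together with the fact that $\BMOA\subset\mathcal B$), while the term $B'/B\cdot g'$, after the cancellation of its pole using~\eqref{eq:iprop}, is dominated by quantities of the form $\varrho(z,\Lambda)^{-1}|B(z)|^{-1}$ times smooth factors — bounded by uniform separation — so $|B'/B\cdot g'|^2(1-|z|^2)^3\,dm(z)$ is Carleson; similarly for $B''/B$ after removing its singular part. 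The hard part here will again be the estimate for the $B$-terms near $\Lambda$: one must quantify how $(B'/B)(z) - \sum_n \frac{1}{z-z_n}$ type remainders behave, using that $\Lambda$ uniformly separated gives $|B(z)|\gtrsim\varrho(z,\Lambda)$ and that the Blaschke/Carleson condition on $\Lambda$ lets one sum the contributions of the individual factors.

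In summary, both directions hinge on the single algebraic identity obtained by substituting $f=Be^g$ into~\eqref{eq:de2}, which transforms the linear equation into a Riccati equation for $g'$ with an added singular coefficient $2B'/B$; the interpolation condition~\eqref{eq:iprop} is exactly the compatibility condition that kills the apparent simple poles at $\Lambda$, and uniform separation of $\Lambda$ provides the quantitative control needed for the Carleson-measure bounds. I expect no genuinely new idea to be required beyond assembling \cite{GN:2017}, \cite{GNR:2018}, and standard interpolating-Blaschke-product estimates; the delicate point throughout is the behavior near the zero set $\Lambda$, where the naive expressions for $A$ and for $g'$ blow up and must be shown, via~\eqref{eq:iprop} and uniform separation, to be harmless.
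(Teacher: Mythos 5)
Your algebraic skeleton is the same as the paper's: substitute $f=Be^g$ into~\eqref{eq:de2}, observe that the interpolation property~\eqref{eq:iprop} is exactly the residue cancellation that makes $-f''/f$ analytic across $\Lambda$, and use uniform separation (via $|B(z)|\gtrsim\varrho(z,\Lambda)$ and the weak embedding property) to control the $B$-terms. The converse direction as you sketch it is essentially what the paper does (and leaves to the reader). The problem is in the forward direction, at precisely the step you yourself flag as ``the main obstacle'': you never supply a mechanism for concluding $g\in{\rm BMOA}$.

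The gap is this. The differential equation only gives you control of the combination $g''+(g')^2$ (equal to $-A-(B''+2B'g')/B$), not of $g'$ itself; because of possible cancellation between $g''$ and $(g')^2$, you cannot ``deduce that $|g'(z)|^2(1-|z|^2)\,dm(z)$ is a Carleson measure'' from this identity by any direct estimate, and your appeal to ``good boundary behavior of $f/B$'' (which moreover leans on $f\in H^p$, a fact you admit is only available \emph{after} the factorization is established) does not close this. The paper bridges exactly this gap with the Bishop--Jones $L^2$ Schwarzian estimate \cite[Corollary~7]{BJ:1994}: applied to the M\"obius-shifted functions $g_a=g\circ\varphi_a-g(a)$, it bounds $\sup_a\|g_a\|_{H^2}^2$ (i.e.\ the BMOA norm) by $\sup_a|g'(a)|^2(1-|a|^2)^2$ plus $\sup_a\int_\D|g_a''+(g_a')^2|^2(1-|z|^2)^3\,dm(z)$, which is the quantity the equation actually controls. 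Two further ingredients are needed to run that estimate, neither of which appears in your plan: (i) the a~priori fact that $g\in\mathcal{B}$, which the paper obtains first from the intermediate Theorem~\ref{thm:factorization_h2} (the $A\in H^\infty_2$ case, proved via uniform local univalence of quotients of solutions and \cite[Lemma~11]{GNR:2018}) and which is used to absorb the $|g'(a)|^2(1-|a|^2)^2$ term, the $\varphi_a''$ cross term, and the $B'g'$ term; and (ii) the application of \cite[Lemma~1]{G:2017} to the function $B''+2B'g'\in H^\infty_2$, which vanishes on $\Lambda$ by~\eqref{eq:iprop}, to show that $|(B''+2B'g')/B|^2(1-|z|^2)^3$ integrates to a Carleson bound over the pseudo-hyperbolic discs $\Delta(z_n,\delta)$, summable because $\Lambda$ is uniformly separated. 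Without a substitute for Bishop--Jones and without the prior Bloch bound, your forward direction does not go through. (A minor slip elsewhere: $B''/B$ has a simple pole at $z_n$, not a double pole, since $B$ has a simple zero there; your parenthetical corrects this and the residue bookkeeping ends up right.)
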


The factorization $f=Be^g$ ensured by Theorem~\ref{thm:factorization_main} contains no singular inner factors,
which follows from \cite[Corollary~3, p.~34]{D:2000} as $\log\frac{f}{B} = g \in {\rm BMOA} \subset H^1$.

The class of normal functions consists of those meromorphic functions $f$ in $\D$ for which
$\sup_{z\in\D} \, f^{\#}(z) (1-|z|^2) < \infty$, where $f^\# = |f'|/(1+|f|^2)$ is the spherical derivative;
equivalently, $f$ is normal if and only if
$\{ f \circ \phi : \text{$\phi$ conformal automorphism of $\D$}\}$ is a~normal family in 
the sense of Montel \cite{LV:1957}.
If $A\in\H(\D)$ and $|A(z)|^2(1-|z|^2)^3\, dm(z)$ is a~Carleson measure, then
all normal solutions of~\eqref{eq:de2} belong to $\bigcup_{0<p<\infty} H^p$ by \cite[Corollary~9]{GNR:2018},
whose proof relies on a~highly involved stopping time argument.
This result is incomplete as non-normal solutions are possible; see the proof of \cite[Theorem~3]{G:2017_1} where
$|A(z)|^2(1-|z|^2)^3\, dm(z)$ is in fact a~Carleson measure.
The factorization ensured by Theorem~\ref{thm:factorization_main} allows us to extend \cite[Corollary~9]{GNR:2018} to all solutions.


\begin{corollary} \label{cor:factorization_main}
  If $A\in\H(\D)$ and $|A(z)|^2(1-|z|^2)^3\, dm(z)$ is a~Carleson measure, then all solutions of~\eqref{eq:de2} belong to $\bigcup_{0<p<\infty} H^p$.
\end{corollary}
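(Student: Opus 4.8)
The plan is to deduce Corollary~\ref{cor:factorization_main} directly from the factorization $f=Be^g$ supplied by Theorem~\ref{thm:factorization_main}. Since $B$ is a~Blaschke product, it is a~bounded analytic function with $\nm{B}_{H^\infty}=1$, so it suffices to show that $e^g\in\bigcup_{0<p<\infty}H^p$ whenever $g\in{\rm BMOA}$; then $f=Be^g$ lies in the same Hardy space by H\"older's inequality (or simply because $|f|\le|e^g|$ pointwise). Thus the corollary reduces to the classical fact that the exponential of a~${\rm BMOA}$ function belongs to some Hardy space.

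First I would invoke the John--Nirenberg inequality for ${\rm BMOA}$: there exist absolute constants $c_1,c_2>0$ such that for every $g\in{\rm BMOA}$ and every arc $I\subset\partial\D$,
\begin{equation*}
\frac{1}{|I|}\,\abs{\set{\theta\in I : \abs{g(e^{i\theta})-g_I} > \lambda}} \leq c_1\,\exp\!\left(-\frac{c_2\,\lambda}{\nm{g}_{\rm BMOA}}\right),\quad \lambda>0,
\end{equation*}
where $g_I$ is the average of the boundary values of $g$ over $I$. Taking $I=\partial\D$, this shows that $e^{p\,\Real g}$ is integrable on $\partial\D$ as soon as $p<c_2/\nm{g}_{\rm BMOA}$, because the distribution function of $\Real g$ decays exponentially at a~rate governed by $\nm{g}_{\rm BMOA}$. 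Consequently $|e^g|^p=e^{p\,\Real g}$ has integrable boundary values for such $p$, and a~standard argument (the boundary function dominates the integral means, e.g. via subharmonicity of $|e^g|^p$ together with Fatou's lemma, or directly from the characterization of $H^p$ by boundary values) gives $e^g\in H^p$. Hence $e^g\in\bigcup_{0<p<\infty}H^p$, and therefore so does $f=Be^g$.

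The only point requiring care is that the threshold $p<c_2/\nm{g}_{\rm BMOA}$ depends on $g$, hence on the particular solution $f$; this is harmless because the statement only asserts membership in the union over all $p$, not in a~fixed $H^p$. One should also note that the trivial solution $f\equiv0$ is covered vacuously, so the hypothesis $f\not\equiv0$ needed to apply Theorem~\ref{thm:factorization_main} is not a~restriction. I do not anticipate a~genuine obstacle here: the substance of the corollary is entirely carried by Theorem~\ref{thm:factorization_main}, and what remains is the well-known embedding $e^{\rm BMOA}\subset\bigcup_{0<p<\infty}H^p$, which one can alternatively cite from the literature on ${\rm BMOA}$ rather than rederive from John--Nirenberg.
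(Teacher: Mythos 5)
Your proposal is correct and follows essentially the same route as the paper: factor $f=Be^g$ via Theorem~\ref{thm:factorization_main}, discard the bounded Blaschke factor, and use the embedding $e^{{\rm BMOA}}\subset\bigcup_{0<p<\infty}H^p$, which the paper simply cites from Cima--Schober \cite[Theorem~1]{CS:1976} rather than rederiving from John--Nirenberg as you do. Your remarks about the $p$-threshold depending on $g$ and the trivial solution being covered vacuously are both accurate.
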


Since $e^g \in \bigcup_{0<p<\infty} H^p$ for any $g\in{\rm BMOA}$ by \cite[Theorem~1]{CS:1976}, Corollary~\ref{cor:factorization_main}
is an~immediate consequence of Theorem~\ref{thm:factorization_main} and needs no further proof.


\subsection{The intermediate case} \label{subsec:inter}
If $A\in H^\infty_2$, then all solutions of~\eqref{eq:de2} belong to the Korenblum space $\bigcup_{0<\alpha<\infty} H^\infty_\alpha$.
This fact can be proved by classical comparison theorem \cite[Example~1]{P:1982};
growth estimates \cite[Theorem 4.3(2)]{H:2000}, \cite[Theorem~3.1]{HKR:2007}; successive approximations \cite[Theorem~I]{G:2011};
and straight-forward integration \cite[Theorem~2]{GR:2017}, \cite[Corollary~4(a)]{HKR:2016}.
Some solutions of~\eqref{eq:de2} may lie outside $\bigcup_{0<p<\infty} H^p$.
This can happen for solutions with no zeros \cite[Theorem~3]{G:2018} as well as for solutions
with too many zeros \cite[Corollary~1]{G:2017}, where the Blaschke condition fails to be true.

If $A\in H^\infty_2$, then zero-sequences of all non-trivial solutions of \eqref{eq:de2} are separated, and vice versa
\cite[Theorems~3 and~4]{S:1955}.
If $\Lambda$ is a~separated sequence of sufficiently small upper uniform
density, then there exists $A\in H^\infty_2$ such that~\eqref{eq:de2} admits a~non-trivial solution that vanishes at all points $\Lambda$
\cite[Theorem~1]{G:2017}, but has also other zeros.

The results above show that much is known about the behaviour of solutions of~\eqref{eq:de2} in the case  $A\in H^\infty_2$.
However, we are far away from the complete description of solutions as in Section~\ref{subsec:main}. There
is one exception. If $A\in H^\infty_2$, then
all zero-free solutions $f$ of~\eqref{eq:de2} are of the form $f=e^g$, where $g\in\mathcal{B}$. Conversely, for any $g\in\mathcal{B}$ the function $f=e^g$ is a~solution of~\eqref{eq:de2} for some $A\in H^\infty_2$. See~\cite[Theorem~4(ii)]{GNR:2018} for both of these
assertions, giving a~complete description of zero-free solutions in the case $A\in H^\infty_2$.

We proceed to consider an~intermediate case, where we assume that the solution has uniformly separated zeros
under the coefficient condition $A\in H^\infty_2$. This allows us to prove a~factorization similar to Theorem~\ref{thm:factorization_main}.


\begin{theorem} \label{thm:factorization_h2}
  If $A\in H^\infty_2$ and $f$ is a~solution of~\eqref{eq:de2} whose zero-sequence $\Lambda$ is uniformly separated, then
  $f$ can be factorized as $f=Be^g$, where $B$ is a~Blaschke
  product whose zero-sequence is $\Lambda$ and where $g\in\mathcal{B}$ satisfies the interpolation property~\eqref{eq:iprop}.

  Conversely, if $B$ is any Blaschke product whose zero-sequence $\Lambda$ is uniformly separated
   and $g\in\mathcal{B}$ satisfies the interpolation property~\eqref{eq:iprop}, then $f=Be^g$ is
   a~solution of~\eqref{eq:de2} for some $A\in H^\infty_2$.
 \end{theorem}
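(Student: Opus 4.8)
The plan is to mirror the structure of the proof of Theorem~\ref{thm:factorization_main}, replacing every occurrence of the Carleson-measure condition on $|A(z)|^2(1-|z|^2)^3\,dm(z)$ by the weaker condition $A\in H^\infty_2$, and correspondingly every occurrence of $\rm BMOA$ by the Bloch space~$\mathcal{B}$. First I would treat the forward direction. Let $f$ be a solution of~\eqref{eq:de2} with $A\in H^\infty_2$ whose zero-sequence $\Lambda$ is uniformly separated, and let $B$ be the Blaschke product with zero-sequence $\Lambda$. Since $f$ and $B$ have the same zeros (with multiplicities — here one notes that the uniformly separated sequence $\Lambda$ forces all zeros of $f$ to be simple, as a double zero would violate separation), the quotient $f/B$ is a zero-free analytic function on $\D$, so we may write $f/B = e^g$ for some $g\in\H(\D)$. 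Differentiating $f = Be^g$ twice and substituting into $f''+Af=0$ yields
\begin{equation*}
  \frac{B''}{B} + 2\frac{B'}{B}\,g' + (g')^2 + g'' + A = 0,
\end{equation*}
and evaluating this identity near a zero $z_n\in\Lambda$ — where $B(z_n)=0$, $B'(z_n)\neq0$, and $g$ is analytic — the dominant terms $B''/B + 2(B'/B)g'$ must have a removable singularity, which forces precisely the interpolation property~\eqref{eq:iprop}. The substantive point is then to show $g\in\mathcal{B}$, i.e.\ $|g'(z)|(1-|z|^2)$ is bounded. From $g' = f'/f - B'/B$ one estimates separately: the term $B'/B$ is handled using the lower bound $|B(z)|\geq C\varrho(z,\Lambda)$ for uniformly separated sequences, which gives $|B'(z)/B(z)|(1-|z|^2)\lesssim 1$ away from $\Lambda$ with the singularities cancelled by those of $f'/f$; the term $f'/f$ is controlled by the known growth estimates for solutions of~\eqref{eq:de2} under $A\in H^\infty_2$ — namely all solutions lie in $\bigcup_{0<\alpha<\infty}H^\infty_\alpha$ (cited in Section~\ref{subsec:inter}) — combined with a local Schwarz–Pick-type estimate on pseudo-hyperbolic discs $\Delta(z_n,\delta)$ to absorb the zeros.

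For the converse direction, suppose $B$ is a Blaschke product with uniformly separated zero-sequence $\Lambda$ and $g\in\mathcal{B}$ satisfies~\eqref{eq:iprop}. Set $f = Be^g$. The interpolation property guarantees exactly that the meromorphic function
\begin{equation*}
  A := -\frac{f''}{f} = -\frac{B''}{B} - 2\frac{B'}{B}\,g' - (g')^2 - g''
\end{equation*}
has removable singularities at each $z_n\in\Lambda$, hence $A\in\H(\D)$. It remains to verify $A\in H^\infty_2$, i.e.\ $|A(z)|(1-|z|^2)^2$ is bounded. Each of the four terms is estimated: $(g')^2(1-|z|^2)^2$ is bounded since $g\in\mathcal{B}$; $g''(1-|z|^2)^2$ is bounded because $g\in\mathcal{B}$ implies $|g''(z)|(1-|z|^2)^2\lesssim\|g\|_{\mathcal{B}}$ (a standard Cauchy-estimate consequence); and for the two terms involving $B$ one uses that $B$ is an interpolating Blaschke product, so $|B'(z)/B(z)|(1-|z|^2)$ and $|B''(z)/B(z)|(1-|z|^2)^2$ are bounded on $\D$ — again via the lower bound $|B(z)|\geq C\varrho(z,\Lambda)$ and Cauchy estimates — while the singular contributions on $\Delta(z_n,\delta)$ cancel by~\eqref{eq:iprop}, so that the combination $B''/B + 2(B'/B)g'$ is in fact a bounded-times-$(1-|z|^2)^{-2}$ analytic function. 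Assembling the four bounds gives $A\in H^\infty_2$.

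I expect the main obstacle to be the local analysis near the zero-sequence $\Lambda$ in the forward direction: showing that, after the singular parts of $f'/f$ and $B'/B$ cancel, the remaining analytic function $g'$ satisfies the Bloch bound uniformly, including on the pseudo-hyperbolic discs $\Delta(z_n,\delta)$ where the naive term-by-term estimates blow up. The clean way to do this is to work on a single disc $\Delta(z_n,\delta)$ with $\delta$ chosen depending only on the separation constant: there $f$ has a single simple zero, $B$ is comparable to a single Blaschke factor $\varphi_{z_n}$, and one can write $g' = (f/\varphi_{z_n})'/(f/\varphi_{z_n}) - (B/\varphi_{z_n})'/(B/\varphi_{z_n})$ where both quotients $f/\varphi_{z_n}$ and $B/\varphi_{z_n}$ are zero-free on $\Delta(z_n,\delta)$; a normal-families or subharmonicity argument using the $H^\infty_\alpha$ growth bound on $f$ then yields the uniform estimate. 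Everything away from $\bigcup_n\Delta(z_n,\delta)$ is routine. This is essentially the Bloch-space shadow of the $\rm BMOA$ argument in Theorem~\ref{thm:factorization_main}, with Carleson-measure bookkeeping replaced by pointwise growth bookkeeping.
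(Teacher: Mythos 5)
Your converse direction is essentially the paper's argument: split $-A=(B''+2B'g')/B+(g')^2+g''$, use $g\in\mathcal{B}$ for the last two terms, the weak embedding property $\inf\{|B(z)|:\varrho(z,\Lambda)\geq\delta\}>0$ away from the zeros, and the vanishing of $B''+2B'g'$ at each $z_n$ (forced by~\eqref{eq:iprop}) together with $|B(z)|\gtrsim\varrho(z,z_n)$ on $\Delta(z_n,\delta)$. That half is sound. (One caveat: your intermediate claim that $|B''(z)/B(z)|(1-|z|^2)^2$ is bounded on all of $\D$ is false as stated — $B''$ need not vanish at $z_n$ — but you correctly fall back on the cancellation in the combination $B''+2B'g'$, so this is only a presentational slip.)

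The forward direction, however, has a genuine gap at its central step: showing $|f'(z)/f(z)|(1-|z|^2)\lesssim 1$ on $\Omega=\{z:\varrho(z,\Lambda)\geq\delta\}$, which is what you need to conclude $g\in\mathcal{B}$ after subtracting $B'/B$. You propose to get this from the growth estimate $f\in\bigcup_\alpha H^\infty_\alpha$ plus a ``local Schwarz--Pick-type'' or normal-families argument, and you declare the region away from $\bigcup_n\Delta(z_n,\delta)$ ``routine.'' It is not: membership in $H^\infty_\alpha$ gives only an \emph{upper} bound on $|f|$ and says nothing about how small $|f|$ can get between its zeros, so it cannot control the logarithmic derivative. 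The normal-families variant is circular — normalizing $f(\varphi_a(rz))/f(\varphi_a(0))$ requires local boundedness of exactly the quotient $f'/f$ you are trying to estimate. The paper's way around this is the idea missing from your proposal: introduce a second linearly independent solution $f^\star$ and set $w=f^\star/f$. Since the Wronskian is constant, $w'=W/f^2$ and hence $w''/w'=-2f'/f$; moreover $S_w=2A\in H^\infty_2$, so $w$ is uniformly locally univalent, and the estimate of \cite[Lemma~11]{GNR:2018} for such $w$ at points whose pseudo-hyperbolic distance to the poles of $w$ (i.e.\ the zeros of $f$) is at least $\delta$ gives precisely $2|f'/f|(1-|z|^2)\leq 6/\min\{\eta,\delta\}$ on $\Omega$. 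One then passes from a Bloch bound on $\Omega$ to one on all of $\D$ by \cite[Lemma~1]{G:2018}. Without the second-solution/Schwarzian mechanism (or an equivalent substitute), your forward direction does not close.
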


Even in this intermediate case some solutions may lie outside of $\bigcup_{0<p<\infty} H^p$ by \cite[Theorem~3]{G:2018}.
However, solutions of~\eqref{eq:de2} are close to $H^p$ as $p\to 0^+$ in the following sense.
Corollary~\ref{cor:factorization_h2} follows as \cite[Lemma~5.3]{P:1975} is applied to the function $e^g$
in the factorization $f=Be^g$, and it needs no further proof.


\begin{corollary} \label{cor:factorization_h2}
If $A\in H^\infty_2$ and $f$ is a~solution of~\eqref{eq:de2} whose zero-sequence $\Lambda$ is uniformly separated,
then $f=Be^g$ as in Theorem~\eqref{thm:factorization_h2} and
\begin{equation*} 
\sup_{0<r<1}  \left( \frac{1}{2\pi}\int_0^{2\pi} |f(re^{i\theta})|^p \, d\theta  \right)^{1/p} (1-r)^{p\, \|g\|_{\mathcal{B}}^2} < \infty
\end{equation*}
for any $0<p<\infty$.
\end{corollary}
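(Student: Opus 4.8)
The plan is to strip off the Blaschke factor, reduce the statement to an estimate for $p$-th integral means of $e^{g}$, and then invoke the standard bound on integral means of exponentials of Bloch functions. First I would apply Theorem~\ref{thm:factorization_h2}: since $A\in H^\infty_2$ and the zero-sequence $\Lambda$ of $f$ is uniformly separated, $f=Be^{g}$ with $B$ a~Blaschke product and $g\in\mathcal{B}$ satisfying~\eqref{eq:iprop}. As $|B(z)|\le 1$ for all $z\in\D$, we obtain $|f(z)|^{p}\le\bigl|e^{pg(z)}\bigr|$ for every $z\in\D$ and every $0<p<\infty$, hence
\[
\frac{1}{2\pi}\int_0^{2\pi}|f(re^{i\theta})|^{p}\,d\theta\;\le\;\frac{1}{2\pi}\int_0^{2\pi}\bigl|e^{pg(re^{i\theta})}\bigr|\,d\theta,\qquad 0<r<1.
\]
Since $pg\in\mathcal{B}$ with $\|pg\|_{\mathcal{B}}=p\,\|g\|_{\mathcal{B}}$, it remains to control the first integral mean $I(r)=\frac{1}{2\pi}\int_0^{2\pi}|e^{h(re^{i\theta})}|\,d\theta$ of $e^{h}$ for a~generic $h\in\mathcal{B}$, in terms of $\|h\|_{\mathcal{B}}$.

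For that step I would use the classical bound (in substance \cite[Lemma~5.3]{P:1975})
\begin{equation}\label{eq:Bmean}
I(r)\;\le\;e^{\Real h(0)}\,(1-r^{2})^{-\|h\|_{\mathcal{B}}^{2}/4},\qquad 0<r<1,
\end{equation}
for which a~direct argument runs as follows. Write $u=\Real h$ and $\beta=\|h\|_{\mathcal{B}}$. Since $u$ is harmonic, $\Delta(e^{u})=|h'|^{2}e^{u}\le\beta^{2}(1-|z|^{2})^{-2}e^{u}$, so Green's formula gives $r\,I'(r)=\frac{1}{2\pi}\int_{|z|\le r}\Delta(e^{u})\,dm(z)\le\beta^{2}\int_0^{r}s\,(1-s^{2})^{-2}I(s)\,ds$. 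Because $I$ is nondecreasing (the modulus of an~analytic function is subharmonic, so its circular means increase with the radius), I may pull $I(r)$ out of the integral and arrive at the separable inequality $I'(r)/I(r)\le\beta^{2}r\bigl(2(1-r^{2})\bigr)^{-1}$; integrating it from $0$ to $r$ and using $I(0)=e^{\Real h(0)}$ yields~\eqref{eq:Bmean}.

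Finally I would assemble the pieces: applying~\eqref{eq:Bmean} with $h=pg$ gives $\frac{1}{2\pi}\int_0^{2\pi}|f(re^{i\theta})|^{p}\,d\theta\le e^{p\Real g(0)}(1-r^{2})^{-p^{2}\|g\|_{\mathcal{B}}^{2}/4}$, and since $1-r^{2}\ge 1-r$ we get
\[
\left(\frac{1}{2\pi}\int_0^{2\pi}|f(re^{i\theta})|^{p}\,d\theta\right)^{1/p}(1-r)^{p\|g\|_{\mathcal{B}}^{2}}\;\le\;e^{\Real g(0)}\,(1-r)^{3p\|g\|_{\mathcal{B}}^{2}/4}\;\le\;e^{\Real g(0)}
\]
for all $0<r<1$, which is the assertion. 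There is no serious obstacle here: the point is that, once the Blaschke factor is absorbed into the trivial bound $|B|\le 1$, the statement has nothing to do with $f''+Af=0$ and follows from the elementary estimate~\eqref{eq:Bmean}; in particular the exponent $\tfrac14\|h\|_{\mathcal{B}}^{2}$ it provides is comfortably inside the slack $p\|g\|_{\mathcal{B}}^{2}$ allowed by the corollary, so no sharp form of \cite[Lemma~5.3]{P:1975} is required.
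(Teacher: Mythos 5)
Your proposal is correct and follows essentially the same route as the paper, which simply notes that the corollary is obtained by applying \cite[Lemma~5.3]{P:1975} to the factor $e^g$ from Theorem~\ref{thm:factorization_h2} after discarding the Blaschke product via $|B|\le 1$. Your self-contained derivation of the integral-means estimate for $e^h$ with $h\in\mathcal{B}$ (via Green's formula and the monotonicity of circular means) is a valid reconstruction of that lemma, and the final bookkeeping with the exponents is accurate.
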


The following result concerns local behavior of non-trivial solutions around their zeros.


\begin{theorem} \label{thm:bloch}
  Suppose that $A\in H^\infty_2$ and $f$ is a~solution of~\eqref{eq:de2} whose zero-sequence $\Lambda$ is uniformly separated.
    Define $h_{z_n}(z) = f(\varphi_{z_n}(\delta z))$ for $z_n\in\Lambda$ and $0<\delta<1$. Then
$\| h_{z_n}''/h_{z_n}' \|_{H^\infty} \lesssim \delta$
uniformly for all $z_n\in\Lambda$.
\end{theorem}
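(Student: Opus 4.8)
The plan is to differentiate the composition by the chain rule and reduce the statement to a single pointwise bound on $f/f'$ over the pseudohyperbolic discs $\Delta(z_n,\delta)$. Put $\psi(z)=\varphi_{z_n}(\delta z)=(z_n-\delta z)/(1-\delta\overline{z_n}z)$, a~M\"obius map of $\D$ onto $\Delta(z_n,\delta)$ with $\psi(0)=z_n$, so that $h_{z_n}=f\circ\psi$ and
\begin{equation*}
\frac{h_{z_n}''}{h_{z_n}'}=\frac{f''(\psi)}{f'(\psi)}\,\psi'+\frac{\psi''}{\psi'}.
\end{equation*}
A~direct computation gives $\psi''(z)/\psi'(z)=2\delta\overline{z_n}/(1-\delta\overline{z_n}z)$, so the second term has $H^\infty$-norm at most $2\delta/(1-\delta)\lesssim\delta$; and the Schwarz--Pick identity for $\varphi_{z_n}$ gives $|\psi'(z)|=\delta(1-|\psi(z)|^2)/(1-\delta^2|z|^2)\le\delta(1-|\psi(z)|^2)/(1-\delta^2)$. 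Writing the equation as $f''=-Af$ and using $A\in H^\infty_2$, I would bound the first term, at a~point $z$ with $w=\psi(z)\in\Delta(z_n,\delta)$, by
\begin{equation*}
|A(w)|\,\Bigl|\frac{f(w)}{f'(w)}\Bigr|\,|\psi'(z)|\le\frac{\|A\|_{H^\infty_2}}{1-\delta^2}\cdot\frac{\delta}{1-|w|^2}\,\Bigl|\frac{f(w)}{f'(w)}\Bigr|.
\end{equation*}
Thus everything reduces to the estimate
\begin{equation} \label{eq:sketch_key}
\Bigl|\frac{f(w)}{f'(w)}\Bigr|\le C_0\,\delta\,(1-|w|^2),\qquad w\in\Delta(z_n,\delta),
\end{equation}
for a~constant $C_0$ depending only on $f$, valid for all $z_n\in\Lambda$ and all sufficiently small $\delta$ (threshold depending only on $f$): granting \eqref{eq:sketch_key}, the first term is $\le C_0\|A\|_{H^\infty_2}\delta^2/(1-\delta^2)\lesssim\delta$, uniformly in $z_n$. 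Note that \eqref{eq:sketch_key} also forces $f'\ne0$ on $\Delta(z_n,\delta)$, so $h_{z_n}'$ is zero-free there and $h_{z_n}''/h_{z_n}'\in H^\infty$.

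To establish \eqref{eq:sketch_key} I would invoke Theorem~\ref{thm:factorization_h2}: $f=Be^g$ with $B$ a~Blaschke product whose zero-sequence is the uniformly separated set $\Lambda$, and $g\in\mathcal{B}$. Peel off the factor at $z_n$, writing $B=c_n\varphi_{z_n}B_n$ with $|c_n|=1$ and $B_n$ the Blaschke product over $\Lambda\setminus\{z_n\}$; if $0<\gamma<1$ is the hyperbolic separation constant of $\Lambda$ and $\delta\le\gamma/2$, then $B_n$ is zero-free on $\Delta(z_n,\gamma)$. From $f'/f=\varphi_{z_n}'/\varphi_{z_n}+B_n'/B_n+g'$ one obtains
\begin{equation*}
\frac{f(w)}{f'(w)}=\frac{\varphi_{z_n}(w)}{\varphi_{z_n}'(w)+\varphi_{z_n}(w)\bigl(B_n'(w)/B_n(w)+g'(w)\bigr)},
\end{equation*}
where the numerator has modulus $\varrho(w,z_n)<\delta$. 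For the denominator I would use $|\varphi_{z_n}'(w)|=(1-\varrho(w,z_n)^2)/(1-|w|^2)\ge(1-\delta^2)/(1-|w|^2)$, the Bloch bound $|g'(w)|\le\|g\|_{\mathcal{B}}/(1-|w|^2)$, and the bound $|B_n'(w)/B_n(w)|\le C_1/(1-|w|^2)$ on $\Delta(z_n,\delta)$ discussed below; together these give a~denominator of modulus at least $\bigl(1-\delta^2-\delta(C_1+\|g\|_{\mathcal{B}})\bigr)/(1-|w|^2)\ge 1/(2(1-|w|^2))$ once $\delta$ is small enough, so \eqref{eq:sketch_key} follows with $C_0=2$.

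The remaining ingredient — the uniform bound on $B_n'/B_n$ near $z_n$ — is where I expect the real work. I would write $B_n'/B_n=\sum_{z_k\in\Lambda\setminus\{z_n\}}\varphi_{z_k}'/\varphi_{z_k}$, use $|\varphi_{z_k}'(w)/\varphi_{z_k}(w)|=(1-\varrho(w,z_k)^2)/\bigl((1-|w|^2)\varrho(w,z_k)\bigr)$ together with $1-\varrho(w,z_k)^2=(1-|w|^2)(1-|z_k|^2)/|1-\overline{z_k}w|^2$, and invoke the triangle inequality $\varrho(w,z_k)\ge\varrho(z_n,z_k)-\varrho(z_n,w)>\gamma-\delta\ge\gamma/2$ for $w\in\Delta(z_n,\delta)$. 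These reduce the matter to
\begin{equation*}
\Bigl|\frac{B_n'(w)}{B_n(w)}\Bigr|\le\frac{2}{\gamma}\sum_{z_k\in\Lambda\setminus\{z_n\}}\frac{1-|z_k|^2}{|1-\overline{z_k}w|^2}\le\frac{C_1}{1-|w|^2},
\end{equation*}
where the last inequality is exactly the Carleson-measure property of $\sum_{z_k\in\Lambda}(1-|z_k|)\delta_{z_k}$ built into the definition of uniform separation, namely $\sup_{a\in\D}\sum_{z_k\in\Lambda}(1-|z_k|)|\varphi_a'(z_k)|<\infty$ with $|\varphi_w'(z_k)|=(1-|w|^2)/|1-\overline{z_k}w|^2$. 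All constants occurring depend only on $f$ — through $\|A\|_{H^\infty_2}$, $\|g\|_{\mathcal{B}}$, and the separation and Carleson constants of $\Lambda$ — and not on $z_n$, which delivers the uniformity in $z_n$ asserted by the theorem. The main obstacle is thus organizing these estimates so that the smallness required of $\delta$ is dictated only by $f$, together with the standard but slightly delicate bookkeeping for the pseudohyperbolic geometry of $\Lambda$.
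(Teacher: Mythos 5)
Your proposal is correct and follows essentially the same route as the paper's proof: chain rule for $h_{z_n}=f\circ\varphi_{z_n}(\delta\,\cdot)$, substitution $f''=-Af$ with $A\in H^\infty_2$, and reduction to the uniform bound $|f(w)|/\bigl(|f'(w)|(1-|w|^2)\bigr)\lesssim\delta$ on $\Delta(z_n,\delta)$ via the factorization $f=Be^g$ from Theorem~\ref{thm:factorization_h2}. The only (minor) divergence is in that last estimate: the paper uses $|B|\le\delta$ on $\Delta(z_n,\delta)$ together with a cited lemma guaranteeing $\inf_{\Delta(z_n,\delta)}|B'(z)|(1-|z|^2)$ is uniformly bounded away from zero for interpolating Blaschke products (which also supplies the zero-freeness of $h_{z_n}'$ that the paper instead gets from a critical-point lemma), whereas you derive the same lower bound self-containedly by peeling off the factor $\varphi_{z_n}$ and invoking the Carleson-sum characterization of uniform separation.
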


In the case of Theorem~\ref{thm:bloch}, the restriction $f |_{\Delta(z_n,\delta)}$
of a~non-trivial solution~$f$
to a~pseudo-hyperbolic disc around its zero $z_n\in\Lambda$, is a~univalent function.
See the discussion after the proof of Theorem~\ref{thm:bloch} for more details.


\subsection{Zero-free solutions} \label{subsec:zerofree}

Theorems~\ref{thm:factorization_main}~and~\ref{thm:factorization_h2} imply that
the maximal growth of solutions of~\eqref{eq:de2} is similar to the maximal growth
of zero-free solutions described in~\cite[Theorem~4]{GNR:2018}.
We proceed to consider zero-free solutions of~\eqref{eq:de2} in a~more general setting.
For $X\subset \mathcal{H}(\D)$ and $n\in\N$, let $X^{(n)} = \{ f^{(n)} : f\in X \}$
denote the collection of $n$th derivatives of functions in $X$. We apply the classical notation
$X'$ and $X''$ in the case of first and second derivatives, respectively.
For example, $\mathcal{B}'' = H^\infty_2$ by standard estimates,
while ${\rm BMOA}''$ consists of those functions $f\in\H(\D)$ for which $|f''(z)|^2(1-|z|^2)^3\, dm(z)$
is a~Carleson measure by \cite[Theorem~3.2]{R:2003}.

For any $g\in\H(\D)$, define the non-linear operator
\begin{equation*} 
  R(g)(z) = \int_0^z \!\!\int_0^\zeta \big( - g''(\xi) - g'(\xi)^2 \big) \, d\xi\, d\zeta, \quad z\in\D.
\end{equation*}
In this paper, the~linear subspace $X\subset \H(\D)$ is said to be \emph{admissible} if the following conditions hold:
\begin{enumerate}
\item[\rm (i)] $X$ contains all polynomials (restricted to $\D$);
\item[\rm (ii)] for all $g\in\H(\D)$, we have $g\in X$ if and only if $R(g)\in X$.
\end{enumerate}
Note that either of the terms $g''$ and $(g')^2$ in the definition of $R(g)$ may be dominant. For example, if $g'$ is an~infinite Blachke
product, then $(g')^2$ is bounded while $g''$ is unbounded. Conversely, if $g'(z)= (1-z)^{-2}$, then $g'(z)^2 = (1-z)^{-4}$
grows a~lot faster than $g''(z) = 2(1-z)^{-3}$.
    
Many classical function spaces are admissible, see
Section~\ref{sec:examples}. The following result describes admissibility in terms of Schwarzian derivatives.
For meromorphic functions $w$ in $\D$, the pre-Schwarzian derivative $P_w$ and
the Schwarzian derivative $S_w$ are defined as
\begin{equation*}
P_w = \frac{w''}{w'}\, , \quad S_w = P_w ' - \frac{1}{2} \left( P_w \right)^2.
\end{equation*}


\begin{proposition} \label{prop:cha}
  Let $X\subset \H(\D)$ be a~linear subspace, which contains all polynomials (restricted to $\D$).
  The following are equivalent:
  \begin{enumerate}
  \item[\rm (i)] $X$ is admissible;
  \item[\rm (ii)] for all locally univalent $w\in\H(\D)$,
    \begin{equation*}
  P_w\in X' \quad \Longleftrightarrow \quad S_w \in X''.
\end{equation*}
  \end{enumerate}
\end{proposition}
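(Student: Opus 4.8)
The key observation is that the operator $R$ and the Schwarzian are intertwined by the logarithmic derivative. Let me verify the algebra that drives everything: if $w$ is locally univalent and we set $g = \log w'$, then $g' = w''/w' = P_w$, so $P_w \in X'$ is equivalent to $g \in X$ (using linearity of $X$ and that $X$ contains the primitive up to a polynomial — actually we only need $g' \in X'$, which is exactly $P_w \in X'$). Meanwhile
$$-g'' - (g')^2 = -\left(\frac{w''}{w'}\right)' - \left(\frac{w''}{w'}\right)^2 = -P_w' - (P_w)^2.$$
Comparing with $S_w = P_w' - \tfrac12 (P_w)^2$, a short computation shows these are not literally equal, so I should be careful: the natural identity is $R(g)'' = -g'' - (g')^2$, and I want to relate this to $S_w$. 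Writing $R(g)'' = -P_w' - (P_w)^2 = -\bigl(S_w + \tfrac12(P_w)^2\bigr) - (P_w)^2 = -S_w - \tfrac32 (P_w)^2$. This has an extra $(P_w)^2$ term, so $R(\log w')$ is not the right object. Instead the plan is to use $g = \tfrac12 \log w'$, so that $g' = \tfrac12 P_w$ and $g'' = \tfrac12 P_w'$; then $R(g)'' = -g'' - (g')^2 = -\tfrac12 P_w' - \tfrac14 (P_w)^2$, which is still not $S_w$. Let me reconsider: the correct substitution is to recall that if $f'' + Af = 0$ and $f = e^g$ (zero-free case), then $A = -g'' - (g')^2 = R(g)''$, and separately if $f_1, f_2$ are linearly independent solutions then $A = \tfrac12 S_{f_1/f_2}$. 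So the bridge between $R$ and $S$ is really: for the zero-free solution $f = e^g$, the associated coefficient is $A = R(g)''$, and for a locally univalent $w = f_1/f_2$ built from two solutions, $A = \tfrac12 S_w$. Thus I expect the proof to route through: $g \in X \iff R(g) \in X$ (admissibility) corresponds exactly to $A = R(g)'' \in X''$ being controlled by $g' \in X'$, and then the locally univalent reformulation comes from identifying $g'$ with (a multiple of) $P_w$ for a suitable $w$.

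Concretely, here is the plan. First, for the implication (i) $\Rightarrow$ (ii): given locally univalent $w$ with $P_w \in X'$, I want $S_w \in X''$. Set $g$ to be a primitive of $\tfrac12 P_w$ (well-defined up to a constant; $X$ being a linear space containing constants, this is harmless), so $g \in X$ iff $g' = \tfrac12 P_w \in X'$ iff $P_w \in X'$. Now compute $R(g)'' = -g'' - (g')^2 = -\tfrac12 P_w' - \tfrac14 (P_w)^2$. I need to relate this to $S_w = P_w' - \tfrac12 (P_w)^2$ — these differ, so the clean statement must actually use a different normalization of the operator or absorb the discrepancy. The honest route: note $-2 R(g)'' = P_w' + \tfrac12 (P_w)^2 = S_w + (P_w)^2$, hence $S_w = -2R(g)'' - (P_w)^2$. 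Since $X''$ contains $R(g)''$ when $R(g) \in X$ (by admissibility, as $g \in X$), it remains to show $(P_w)^2 \in X''$ whenever $P_w \in X'$. This last point is the genuine content and the main obstacle: it is not obvious that $X'$ is closed under the quadratic map $u \mapsto$ (primitive of primitive of $u^2$). But in fact this is precisely what admissibility (ii) encodes once combined with linearity — apply condition (ii) to $g$ and note that $R(g) \in X$ together with $g \in X$ and the polynomial condition forces, by subtracting, that the "quadratic part" lies in $X$. More carefully: $R(g) + \text{(primitive}^2\text{ of } g'') \in X$ would give primitive$^2$ of $(g')^2 \in X$; and primitive$^2$ of $g''$ equals $g$ minus a polynomial, which is in $X$, so indeed primitive$^2$ of $(g')^2 = g - R(g) - (\text{polynomial}) \in X$, i.e. $(g')^2 \in X''$, i.e. $(P_w)^2 \in X''$ up to the constant factor. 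Then $S_w = -2R(g)'' - (P_w)^2 \in X''$ by linearity of $X''$.

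For the converse (ii) $\Rightarrow$ (i): I must show $X$ is admissible, i.e. $g \in X \iff R(g) \in X$ for all $g \in \H(\D)$. Given $g$, the locally univalent function to use is $w$ with $\log w' = 2g$ (so $w' = e^{2g}$, $w = \int e^{2g}$), whence $P_w = 2g'$ and $P_w \in X' \iff g' \in X' \iff g \in X$ (again modulo constants/polynomials, using (i)'s polynomial hypothesis). On the other side, $S_w = P_w' - \tfrac12(P_w)^2 = 2g'' - 2(g')^2 = -2\bigl(-g'' + (g')^2\bigr)$ — note the sign on the $g''$ term is wrong relative to $R(g)'' = -g'' - (g')^2$. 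To fix the sign I instead take $\log w' = -2g$, giving $P_w = -2g'$, $P_w' = -2g''$, $(P_w)^2 = 4(g')^2$, so $S_w = -2g'' - 2(g')^2 = 2R(g)''$. Hence $S_w \in X'' \iff R(g)'' \in X'' \iff R(g) \in X$ (the last equivalence needs that a function in $X$ is determined mod polynomials by its second derivative, together with $X \supset$ polynomials). Also $P_w = -2g' \in X' \iff g \in X$. So (ii) applied to this $w$ gives exactly $g \in X \iff R(g) \in X$, which is admissibility. The main obstacle throughout is bookkeeping the constant and linear terms lost under differentiation/integration — these are absorbed because $X$ is a linear space containing all polynomials, so I will state that reduction once at the start and then work freely modulo polynomials. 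I will also need to confirm every locally univalent $w$ arising in (i)$\Rightarrow$(ii) can be written with $\log w'$ analytic (true since $w'$ is zero-free), so the substitution $g = -\tfrac12 \log w'$ is legitimate.
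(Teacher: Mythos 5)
Your overall architecture is the same as the paper's: connect $R$ and the Schwarzian through a logarithm of $w'$, and for (ii)$\Rightarrow$(i) use $w=\int_0^z e^{-2g}$, which gives $P_w=-2g'$ and $S_w=2R(g)''$ so that the equivalence in (ii) becomes exactly $g\in X\Leftrightarrow R(g)\in X$ modulo polynomials. That half is correct. The problem is in (i)$\Rightarrow$(ii). First, you chose the wrong sign: with $g'=\tfrac12 P_w$ you get $R(g)''=-\tfrac12 P_w'-\tfrac14(P_w)^2$, which is not a multiple of $S_w$, and you then have to prove separately that $(P_w)^2\in X''$. Your argument for that step (the double primitive of $(g')^2$ equals $-R(g)-g$ plus a linear polynomial, hence lies in $X$ when $g\in X$ and $R(g)\in X$) is in fact valid, but the whole detour disappears if you take $g=-\tfrac12\log w'$, for which $R(g)''=\tfrac12 P_w'-\tfrac14(P_w)^2=\tfrac12 S_w$ exactly. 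You name this substitution in your last sentence but never actually use it; this is the identity the paper's proof is built on.

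Second, and more seriously, statement (ii) is an equivalence, and in (i)$\Rightarrow$(ii) you only prove $P_w\in X'\Rightarrow S_w\in X''$. The converse, $S_w\in X''\Rightarrow P_w\in X'$, is never addressed, and it does not follow from your identity $S_w=-2R(g)''-(P_w)^2$: knowing only that $S_w\in X''$ you cannot separate the two terms on the right, since concluding $R(g)\in X$ via admissibility would require already knowing $g\in X$, i.e.\ $P_w\in X'$, which is what you are trying to prove. With the correct substitution $g=-\tfrac12\log w'$ this direction is immediate: $S_w\in X''$ gives $R(g)''=\tfrac12 S_w\in X''$, hence $R(g)\in X$ (modulo a linear polynomial, absorbed since $X$ contains polynomials), hence $g\in X$ by admissibility, hence $P_w=-2g'\in X'$. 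So the gap is fixable, but as written the proof of (i)$\Rightarrow$(ii) is only half there.
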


The following result is a~generalization of \cite[Theorem~4]{GNR:2018}, which
corresponds to the cases $X=\mathcal{B}$ and $X={\rm BMOA}$.
The philosophy behind Proposition~\ref{prop:aux} originates from the following identity.
If $f=e^g$ is a non-vanishing solution of \eqref{eq:de2}, then $g$ is a~solution of
the Riccati differential equation $g'' = -A - (g')^2$.


\begin{proposition} \label{prop:aux}
Let $X$ be an~admissible linear subspace of $\H(\D)$, and
let $f$ be a~zero-free solution of~\eqref{eq:de2}.
Then $A\in X''$ if and only if $f=e^g$ where $g \in X$.
\end{proposition}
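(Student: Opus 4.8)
The plan is to prove both implications by passing through the Riccati equation mentioned just before the statement. Suppose first that $f$ is a zero-free solution of~\eqref{eq:de2}. Since $f\in\H(\D)$ has no zeros and $\D$ is simply connected, we may write $f=e^g$ for some $g\in\H(\D)$; this $g$ is determined up to an additive constant $2\pi i k$, which does not affect membership in any linear subspace containing the constants, so it suffices to fix one branch. Differentiating $f=e^g$ gives $f'=g'e^g$ and $f''=(g''+(g')^2)e^g$, so \eqref{eq:de2} becomes $g''+(g')^2+A=0$, i.e. $g'' = -A-(g')^2$. Integrating twice from $0$ yields
\begin{equation*}
  g(z) = g(0) + g'(0)z + \int_0^z\!\!\int_0^\zeta\big(-A(\xi)-g'(\xi)^2\big)\,d\xi\,d\zeta,
\end{equation*}
and similarly $A$ can be recovered from $g$ by the same double integration. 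The key observation is that $R(g)(z)=\int_0^z\!\!\int_0^\zeta(-g''(\xi)-g'(\xi)^2)\,d\xi\,d\zeta$ agrees with $\int_0^z\!\!\int_0^\zeta(-A(\xi)-g'(\xi)^2)\,d\xi\,d\zeta$ up to the linear polynomial $g(0)+g'(0)z$; hence $R(g)$ and the function whose second derivative is $A$ differ by a polynomial of degree at most one.

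Now assume $A\in X''$, say $A=h''$ with $h\in X$. I would like to conclude $g\in X$. From the Riccati relation, $-g''-(g')^2 = A = h''$, so integrating twice, $R(g) = h + (\text{linear polynomial})$. Since $X$ contains all polynomials and is a linear subspace, $R(g)\in X$; then condition~(ii) of admissibility gives $g\in X$, as desired. Conversely, if $f=e^g$ with $g\in X$, then by admissibility $R(g)\in X$, and since $R(g)''=-g''-(g')^2$, while from the Riccati equation $-g''-(g')^2=A$ whenever $f=e^g$ solves~\eqref{eq:de2} — one must check this direction: given $g\in X$, set $A:=-g''-(g')^2$; then $f=e^g$ solves $f''+Af=0$ automatically, and $A=R(g)''\in X''$ because $R(g)\in X$. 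So in fact no extra hypothesis is needed beyond $g\in X$ to produce the coefficient in $X''$.

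The only genuinely delicate point is bookkeeping the polynomial discrepancies: $R(g)$ is defined with the specific normalization of vanishing together with its first derivative at the origin, whereas "a primitive of a primitive of $A$" and "$h$ with $h''=A$" carry two free constants of integration each. I would handle this by working with the second derivatives throughout — the identities $R(g)''=-g''-(g')^2$ and $(h)''=A$ are exact, with no ambiguity — and invoking property~(i) (polynomials lie in $X$) together with linearity of $X$ exactly once in each direction to absorb the degree-$\le 1$ terms. A subtlety worth a sentence: the branch of $\log f$ is defined only up to an additive constant, but that constant is a (constant) polynomial, hence in $X$, so "$g\in X$" is unambiguous. With these remarks the proof is a short chain of elementary manipulations; no estimates or stopping-time arguments are involved, since all the analytic content has been pushed into the definition of admissibility.
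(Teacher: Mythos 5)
Your proof is correct and takes essentially the same route as the paper's: both reduce the statement to the exact identity $R(g)'' = -g''-(g')^2 = -f''/f = A$ and then invoke admissibility, with property (i) and linearity absorbing the degree-one polynomial ambiguity when passing from $A\in X''$ to $R(g)\in X$. Your extra bookkeeping about integration constants and the branch of $\log f$ is sound but not a departure from the paper's argument.
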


As the final objective, we study
Riccati differential equations.
If $A\in H^\infty_2$, $B\in H^\infty_1$ and $C\in H^\infty_0$ with
$\inf_{z\in\D} |C(z)|>0$, then each analytic solution $g$ of
\begin{equation} \label{eq:riccati}
g'' = A +B \, g' + C \, (g')^2 
\end{equation}
satisfies $g\in\mathcal{B}$ by \cite[Theorem~1]{Y:1977}.
This results fails to be true without the assumption $\inf_{z\in\D} |C(z)|>0$, since 
$g(z)=1/(1-z)\not\in\mathcal{B}$ is a~solution of~\eqref{eq:riccati} for $A\equiv 0$, $C\equiv 0$ and $B(z)=2/(1-z)$,
 $z\in\D$.
The following theorem generalizes \cite[Theorem~1]{Y:1977} in two ways.
The coefficient conditions are given in terms of admissible linear subspaces, and in certain special cases,
it extends to the case  $\inf_{z\in\D} |C(z)|=0$.


\begin{theorem} \label{thm:riccati}
Let $X$ be an~admissible linear subspace of $\mathcal{H}(\D)$.
If $A,B$ are meromorphic functions in $\D$ and $C\in\mathcal{H}(\D)$ such that $AC\in X''$ and $(B+C'/C)\in X'$,
then each analytic solution $g$ of~\eqref{eq:riccati} satisfies $Cg'\in X'$.
\end{theorem}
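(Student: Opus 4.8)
The plan is to reduce the general Riccati equation \eqref{eq:riccati} to the special form treated by Proposition~\ref{prop:aux} via a change of unknown that absorbs the coefficients $B$ and $C$. Given an analytic solution $g$ of \eqref{eq:riccati}, the natural candidate is to set $w' = e^{\int Cg'}$, so that $P_w = w''/w' = Cg'$; the claim $Cg'\in X'$ is then exactly the assertion $P_w\in X'$. By Proposition~\ref{prop:cha}, since $X$ is admissible this is equivalent to $S_w\in X''$, so the whole problem becomes: show that the Schwarzian of $w$ lies in $X''$. Thus the first step is to compute $S_w$ in terms of $g$, $C$, and the data $A,B$. Writing $u = Cg'$ we have $P_w = u$ and $P_w' = u' = C'g' + Cg''$. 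Substituting $g'' = A + Bg' + C(g')^2$ from \eqref{eq:riccati} gives $u' = C'g' + C(A + Bg' + C(g')^2) = CA + (C' + CB)g' + C^2(g')^2 = CA + (C'/C + B)u + u^2$. Hence
\begin{equation*}
S_w = P_w' - \tfrac12 (P_w)^2 = u' - \tfrac12 u^2 = CA + \Big(\tfrac{C'}{C} + B\Big)u + \tfrac12 u^2.
\end{equation*}
This is the key identity. Now the hypotheses say $AC\in X''$ and $(B + C'/C)\in X'$, and we want to conclude $S_w\in X''$, which by the equivalence above is the same as $u = Cg'\in X'$. So the remaining task is to run a bootstrap/fixed-point argument showing that the middle and last terms do not create growth that escapes $X''$.

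The cleanest route is to invoke Proposition~\ref{prop:aux} directly. Consider the second-order linear equation $h'' + \widetilde{A} h = 0$ with $\widetilde{A} = S_w$ (up to the standard factor; more precisely, $w$ itself need not solve a nice linear equation, but the function $h$ with $h''/h' = -2P_w$, or equivalently $h = (w')^{-1/2}$ after a normalization, satisfies $h'' = -\tfrac12 S_w\, h$). Let me instead argue as follows: define $G$ by $G' = \tfrac12 Cg' = \tfrac12 u$, so that $e^{G}$ is (after normalization) a zero-free solution of a linear equation $f'' + \widetilde A f = 0$ whose coefficient is $\widetilde A = -G'' - (G')^2 = -\tfrac12 u' - \tfrac14 u^2 = -\tfrac12(CA) - \tfrac12(C'/C + B)\cdot\tfrac12 u\cdot 2 \ldots$; the point is that $\widetilde A$ is an affine-quadratic expression in $u$ with coefficients built from $CA\in X''$ and $C'/C + B\in X'$. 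By Proposition~\ref{prop:aux}, if we can show $\widetilde A\in X''$ then $G\in X$, hence $u = 2G'\in X'$, which is the conclusion. Conversely, knowing $u\in X'$ makes $\widetilde A\in X''$ because $X''$ is closed under products with $X'$-functions and under adding $X''$-functions in the relevant way. This circularity is resolved by a successive-approximation scheme: start from $u_0 = 0$ (legitimate since constants/polynomials lie in $X$), define $u_{n+1}$ by solving the linear equation obtained by freezing the quadratic term at $u_n$, and show the iteration converges in a suitable seminorm on $X'$; admissibility of $X$ is exactly what guarantees the relevant operator norms are controlled.

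The main obstacle I anticipate is precisely this last convergence/closure step: the expression for $S_w$ involves $u^2$ and $(C'/C+B)u$, and to keep these inside $X''$ one needs that $X'$ is an algebra-type module under which multiplication by functions in $X'$ and squaring map into $X''$ in a quantitatively controlled way. This is not automatic from the abstract definition of admissible and will require unwinding what admissibility gives — via Proposition~\ref{prop:cha} the operator $R$ and the Schwarzian/pre-Schwarzian correspondence encode exactly the needed stability, so the argument must be phrased so that every nonlinear manipulation is routed through $R$ or through the linear-equation reformulation rather than through naive products. A secondary technical point is handling the meromorphy of $A$ and $B$: one must check that $CA$ and $C'/C + B$ being in $X'' \subset \H(\D)$ and $X'\subset\H(\D)$ respectively already forces the relevant combinations to be analytic, so that $w$, $h = (w')^{-1/2}$, and $G$ are genuinely analytic in $\D$ and the machinery of the previous propositions applies without modification. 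Once these closure properties are in hand, the theorem follows by combining the identity for $S_w$, Proposition~\ref{prop:cha}, and Proposition~\ref{prop:aux}.
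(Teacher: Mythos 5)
There is a genuine gap. You correctly identify the right framework (linearize the Riccati equation and route everything through Propositions~\ref{prop:cha} and~\ref{prop:aux}), and your identity
\begin{equation*}
u' = CA + \Big(B+\tfrac{C'}{C}\Big)u + u^2, \qquad u = Cg',
\end{equation*}
is exactly the computation the paper's proof rests on. But your choice $P_w = u$ leaves the terms $(B+C'/C)u + \tfrac12 u^2$ in $S_w$, and as you yourself note, controlling these requires knowing $u\in X'$ in advance together with closure of $X'$ under products and squaring into $X''$ --- properties that the abstract definition of admissibility simply does not provide. Your proposed fix (a successive-approximation/bootstrap scheme) cannot be carried out at this level of generality: there is no norm, no quantitative structure, and no module property available for a general admissible $X$, so the ``convergence in a suitable seminorm'' step has nothing to converge in. This is not a routine technicality; without a new idea the argument does not close.

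The missing idea is a shifted substitution that makes the problematic terms cancel identically. The paper sets $h' = -\tfrac12\big(B + C'/C\big)\in X'$ (so $h\in X$) and defines $f = \exp\big(\int_0^z(-Cg'+h')\big)$, i.e.\ $f'/f = -u + h'$. Then $-f''/f = (f'/f)'+(f'/f)^2$, and when you substitute your identity for $u'$ the linear term $(B+C'/C)u = -2h'u$ cancels against the cross term $-2uh'$ from $(-u+h')^2$, while the $u^2$ terms cancel outright. What remains is $a := -f''/f = AC + \tfrac12(B+C'/C)' - \tfrac14(B+C'/C)^2 = AC + R(h)''$, which lies in $X''$ \emph{directly} from the hypotheses and the admissibility of $X$ (via $h\in X \Rightarrow R(h)\in X$) --- no bootstrap, no algebra structure on $X'$. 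Proposition~\ref{prop:aux} then gives $f'/f = -Cg'+h'\in X'$, and since $h'\in X'$ and $X'$ is a linear space, $Cg'\in X'$ follows. So your reduction is the right one, but the specific completion-of-the-square via the auxiliary function $h$ is the essential step, and your proposal does not contain it or a workable substitute.
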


Although the coefficients $A$ and $B$ are allowed to be meromorphic in Theorem~\ref{thm:riccati},
the auxiliary functions $AC$, $B+C'/C$ and $C$ are required to be analytic in $\D$.


\begin{example}
Let $C(z) = 1-z$ for $z\in\D$. Then $\inf_{z\in\D} |C(z)|=0$ and $C'(z)/C(z) = -1/(1-z)$, $z\in\D$. 
Now Theorem~\ref{thm:riccati} can be applied for $X=\mathcal{B}$,
$A\in H^\infty_2$ and $B\in H^\infty_1$.
\end{example}


\begin{example}
Let $C(z) = z$ for $z\in\D$. Then~$\inf_{z\in\D} |C(z)|=0$ trivially.
In this case Theorem~\ref{thm:riccati} can be applied for $X=\mathcal{B}$, $A\in H^\infty_2$
and $B = B^\star- 1/z$, where $B^\star\in H^\infty_1$.
\end{example}


\section{Proofs of the results}

The proof of Theorem~\ref{thm:factorization_main} depends on the proof of Theorem~\ref{thm:factorization_h2}, which
is presented first.


\begin{proof}[Proof of Theorem~\ref{thm:factorization_h2}]
  Let $A\in H^\infty_2$ and let $f$ be a~solution of~\eqref{eq:de2} whose zero-sequence $\Lambda$ is uniformly separated.
  Let $B$ be the Blaschke product whose zero-sequence is $\Lambda$. Then $f/B$ is a~non-vanishing
  analytic function in $\D$, and therefore there exists $g\in\H(\D)$ for which $f=Be^g$. The representation $A=-f''/f \in\H(\D)$
  implies that $f''(z_n)=0$ for all $z_n\in\Lambda$, and therefore~\eqref{eq:iprop} holds
  by a~straight-forward computation. It remains to prove that $g\in\mathcal{B}$.

  Let $f^\star$ be another solution of~\eqref{eq:de2}
  linearly independent to $f$, and define the meromorphic function $w$ in $\D$ as $w=f^\star/f$. It follows that
 \begin{equation*}
S_w = \left( \frac{w''}{w'} \right)' - \frac{1}{2} \left(\frac{w''}{w'} \right)^2 = 2A,
\end{equation*}
and therefore $S_w\in H^\infty_2$. It is well-known that this implies $w$ to be uniformly locally univalent;
see for example \cite[Lemma~B]{G:2017_1}.
This means that $w$~is univalent in every pseudo-hyperbolic disc $\Delta(z,\eta)$ for $z\in\D$, if $0<\eta<1$ is chosen to be
\begin{equation*}
\eta = \min\left\{ 1, \sqrt{2}\, \| S_w\|_{H^\infty_2}^{-1/2}  \, \right\}.
\end{equation*}

Since $\Lambda$ is uniformly separated, there exists a~constant $0<\delta <1$ such that
the pseudo-hyperbolic discs $\Delta(z_n,\delta)$ for $z_n\in\Lambda$ are pairwise disjoint.
Let
$$
\Omega = \big\{ z\in\D : \varrho(z,\Lambda)\geq \delta \big\}.
$$
Now~\cite[Lemma~11]{GNR:2018} implies
\begin{equation*}
  2 \, \left| \frac{f'(z)}{f(z)} \right| (1-|z|^2) =   \left| \frac{w''(z)}{w'(z)} \right| (1-|z|^2)  \leq \frac{6}{\min\{\eta,\delta\}},
  \quad z\in\Omega.
\end{equation*}
By writing $f=Be^g$, the previous inequality is equivalent to
\begin{equation*}
\left| \frac{B'(z) + B(z)g'(z)}{B(z)} \right| (1-|z|^2)  \leq \frac{3}{\min\{\eta,\delta\}}, \quad z\in\Omega,
\end{equation*}
which reduces to
\begin{equation*}
  |g'(z)|(1-|z|^2) \leq \frac{3}{\min\{\eta,\delta\}} + \left| \frac{B'(z)}{B(z)} \right| (1-|z|^2)
  \leq \frac{3}{\min\{\eta,\delta\}} + \frac{\|B\|_{\mathcal{B}}}{\displaystyle\inf_{z\in\Omega} |B(z)|}, \quad z\in\Omega.
\end{equation*}
Since the Blaschke product $B$ satisfies the weak embedding property
$\inf_{z\in\Omega} |B(z)|>0$ \cite[Lemmas~1 and 3]{K-L:1969},
we conclude
\begin{equation*}
\sup \bigg\{ |g'(z)| (1-|z|^2) : z \in \D \setminus \bigcup_n \, \Delta(z_n,\delta) \bigg\} < \infty.
\end{equation*}
Finally \cite[Lemma~1]{G:2018} gives $g\in\mathcal{B}$, which completes the proof of the first part.

The second part of the proof follows from the representation
\begin{equation} \label{eq:crep}
-A = \frac{f''}{f} = \frac{B''+2B'g'}{B} + (g')^2 + g''.
\end{equation}
Since $g\in\mathcal{B}$, we have $(g')^2+g''\in H^\infty_2$. Since $B$ is uniformly separated,
the weak embedding property implies $\inf_{z\in\Omega} |B(z)|>0$ as above, and therefore
\begin{equation} \label{eq:dest}
\left| \frac{B''(z)+2B'(z)g'(z)}{B(z)} \right| (1-|z|^2)^2
\end{equation}
is uniformly bounded for $z\in\Omega$ by standard estimates.
Note that $|B(z)|\gtrsim \varrho(z,z_n)$ for all $z\in \Delta(z_n,\delta)$ by \cite[Lemmas~1 and 3]{K-L:1969}.
An application of \cite[Lemma~1]{G:2017} to $B''+2B'g'\in H^\infty_2$,
which vanishes at all points $z_n\in\Lambda$ by the interpolation property~\eqref{eq:iprop}, reveals that
\eqref{eq:dest} is uniformly bounded also for $z\in\bigcup_{z_n\in\Lambda} \Delta(z_n,\delta)$. Finally,
we conclude that $A\in H^\infty_2$.
\end{proof}

Now we are in a~position to prove our main result.


\begin{proof}[Proof of Theorem~\ref{thm:factorization_main}]
  Let $f$ be a~non-trivial solution of~\eqref{eq:de2}, where the coefficient $A\in\H(\D)$ and $|A(z)|^2(1-|z|^2)^3\, dm(z)$ is a Carleson measure.
  According to~\cite[Corollary~3]{GN:2017}, the zero-sequence $\Lambda$
  of $f$ is uniformly separated. Theorem~\ref{thm:factorization_h2}, whose proof is presented above, implies that
  $f=Be^g$, where $B$ is a~Blaschke product whose zero-sequence $\Lambda$ is uniformly separated
  and $g\in\mathcal{B}$ satisfies the interpolation property~\eqref{eq:iprop}. It remains to prove that $g\in{\rm BMOA}$.
  We apply a~result \cite[Corollary~7]{BJ:1994} due to Bishop and Jones to the identity
  \begin{equation} \label{eq:expand}
    g'' + (g')^2 = - A  - \frac{B''+2B'g'}{B},
  \end{equation}
  which follows from~\eqref{eq:de2} by writing $f=Be^g$.

  Define $g_a(z) = g(\varphi_a(z)) - g(a)$ for all $a,z\in\D$.
  By applying \cite[Corollary~7]{BJ:1994} to $-2g_a$, there exists a~constant $C>0$ such that
\begin{align*}
  \|-2 g_a\|^2_{H^2} & \leq C \, |-2 g_a'(0)|^2 + C \int_{\D} \Big| (-2 g_a)''(z)-\frac{1}{2} \, \big((-2 g_a)'(z)\big)^2 \Big|^2 (1-\abs{z}^2)^3 \, dm(z)\\
  & \leq 4C \, \abs{g_a'(0)}^2 + 4 C \int_{\D} \left| g_a''(z)+g_a'(z)^2 \right|^2 (1-\abs{z}^2)^3 \, dm(z).
\end{align*}
We compute
\begin{align*}
  g_a'(z)  = g'(\varphi_a(z))\varphi'_a(z), \qquad
  g_a''(z)  = g''(\varphi_a(z)) \big( \varphi_a'(z) \big)^2 + g'(\varphi_a(z)) \varphi_a''(z),
\end{align*}
and therefore
\begin{align}
  \sup_{a\in\D} \|g_a\|^2_{H^2} 
  & \leq C \, \sup_{a\in\D} \, \abs{g'(a)}^2(1-|a|^2)^2 \label{eq:b} \\
  & \qquad +  2C \, \sup_{a\in\D} \, \int_{\D} \left| g''(\varphi_a(z)) +g'(\varphi_a(z))^2 \right|^2 \big| \varphi_a'(z) \big|^4  (1-\abs{z}^2)^3 \, dm(z)
    \label{eq:bb}\\
  & \qquad +  2C \, \sup_{a\in\D} \, \int_{\D} \left| g'(\varphi_a(z)) \varphi_a''(z) \right|^2 (1-\abs{z}^2)^3 \, dm(z).\label{eq:bbb} 
\end{align}
The right-hand side of~\eqref{eq:b} is bounded above by a~constant multiple of $\|g\|_{\mathcal{B}}^2<\infty$.
The expression \eqref{eq:bbb} is bounded above by a~constant multiple of
\begin{equation*}
\|g\|_{\mathcal{B}}^2 \, \sup_{a\in\D} \, \int_{\D} \left| \frac{\varphi_a''(z))}{\varphi_a'(z) }\right|^2 (1-\abs{z}^2) \, dm(z)<\infty,
\end{equation*}
which follows from standard estimates. It remains to estimate
\eqref{eq:bb}, which reduces to 
\begin{equation} \label{eq:reduced}
  2C \, \sup_{a\in\D} \, \int_{\D} \left| g''(z) +g'(z)^2 \right|^2 (1-\abs{z}^2)^2 (1-|\varphi_a(z)|^2) \, dm(z)
\end{equation}
after a~conformal change of variables. By applying~\eqref{eq:expand}, we see that \eqref{eq:reduced} is bounded above by
a~constant multiple of
\begin{align}
  & \sup_{a\in\D} \, \int_{\D} \left| A(z) \right|^2 (1-\abs{z}^2)^2 (1-|\varphi_a(z)|^2) \, dm(z) \label{eq:1}\\
  & \qquad + \sup_{a\in\D} \, \int_{\D} \left| \frac{B''(z)+2B'(z)g'(z)}{B(z)} \right|^2 (1-\abs{z}^2)^2 (1-|\varphi_a(z)|^2) \, dm(z) \label{eq:2}.
\end{align}
The supremum in~\eqref{eq:1} is finite, since $|A(z)|^2(1-|z|^2)^3\, dm(z)$ is a Carleson measure. It suffices to
consider~\eqref{eq:2}.
Let $0<\delta<1$ be a sufficiently small constant such that the pseudo-hyperbolic discs
$\Delta(z_n,\delta)$ around zeros $z_n\in\Lambda$ are pairwise disjoint.
Denote $$\Omega = \, \D \setminus \, \bigcup_{z_n\in\Lambda} \Delta(z_n,\delta).$$
On one hand, $|B|$ is uniformly bounded away from zero in $\Omega$ by \cite[Lemmas~1 and 3]{K-L:1969} and $g\in\mathcal{B}$, 
and therefore
\begin{align*} 
  & \sup_{a\in\D} \, \int_{\Omega} \left| \frac{B''(z)+2B'(z)g'(z)}{B(z)} \right|^2 (1-\abs{z}^2)^2 (1-|\varphi_a(z)|^2) \, dm(z)\\
  &  \qquad \lesssim \sup_{a\in\D} \, \int_{\D} | B''(z) |^2 (1-\abs{z}^2)^2 (1-|\varphi_a(z)|^2) \, dm(z)\\
  &  \qquad \qquad + 2 \, \| g \|_{\mathcal{B}}^2 \, \sup_{a\in\D} \, \int_{\D} | B'(z) |^2  (1-|\varphi_a(z)|^2) \, dm(z),
\end{align*}
where both supremums are finite since $|B''(z)|^2(1-|z|^2)^3\, dm(z)$ and $|B'(z)|^2(1-|z|^2)\, dm(z)$
are Carleson measures as the Blaschke product $B$ is bounded; see \cite[Theorem~3.2]{R:2003}.

On the other hand, we write
\begin{align*}
  & \sup_{a\in\D} \, \int_{\bigcup_{z_n\in\Lambda} \!\Delta(z_n,\delta)} \left| \frac{B''(z)+2B'(z)g'(z)}{B(z)} \right|^2 (1-\abs{z}^2)^2 (1-|\varphi_a(z)|^2) \, dm(z)\\
  &  \qquad = \sup_{a\in\D} \, \sum_{z_n\in\Lambda} \int_{\Delta(z_n,\delta)} \left| \frac{B''(z)+2B'(z)g'(z)}{B(z)} \right|^2 \, \frac{(1-|z|^2)^3 (1-|a|^2)}{|1-\overline{a}z|^2} \, dm(z).
\end{align*}
Note that $|B(z)|\gtrsim \varrho(z,z_n)$ for all $z\in \Delta(z_n,\delta)$ by \cite[Lemmas~1 and 3]{K-L:1969},
and by standard estimates $|1-\overline{a}z| \simeq |1-\overline{a}z_n|$
for all $z\in \Delta(z_n,\delta)$ and for all $a\in\D$, with comparison constants
independent of $a$. By applying \cite[Lemma~1]{G:2017}, we obtain
\begin{align*}
  & \sup_{a\in\D} \, \int_{\bigcup_{z_n\in\Lambda} \!\Delta(z_n,\delta)} \left| \frac{B''(z)+2B'(z)g'(z)}{B(z)} \right|^2 (1-\abs{z}^2)^2 (1-|\varphi_a(z)|^2) \, dm(z)\\
  &  \qquad \lesssim \, \sup_{a\in\D} \, \sum_{z_n\in\Lambda} \frac{(1-|z_n|^2)(1-\abs{a}^2)}{|1-\overline{a}z_n|^2} < \infty,
\end{align*}
since $\Lambda$ is uniformly separated. This 
completes the first part of the proof.

The second part of the proof follows from \eqref{eq:crep} by applying
estimates reminiscent to those above. We leave the details to the interested reader.
\end{proof}


\begin{proof}[Proof of Theorem~\ref{thm:bloch}]
  Suppose that $A\in H^\infty_2$ and $f$ is a~solution of~\eqref{eq:de2} whose zero-sequence~$\Lambda$ is uniformly separated.
  Define $h_{z_n}(z) = f(\varphi_{z_n}(\delta z))$ for $z_n\in\Lambda$ and $0<\delta<1$.
 
   For all sufficiently small $\delta>0$,
  the pseudo-hyperbolic discs $\Delta(z_n,\delta)$, $z_n\in\Lambda$,
  do not contain any critical points of $f$ (i.e., zeros of the derivative) by~\cite[Corollary~2]{G:2017_1}, and therefore
  $h_{z_n}' \in \H(\D)$ is zero-free for all $z_n\in\Lambda$. We compute
  \begin{equation*} 
    \frac{h_{z_n}''(z)}{h_{z_n}'(z)}
    = \frac{f''(\varphi_{z_n}(\delta z)) \, \varphi_{z_n}'(\delta z) \, \delta}{f'(\varphi_{z_n}(\delta z))}
      + \frac{\varphi_{z_n}''(\delta z) \, \delta}{\varphi_{z_n}'(\delta z)}, \quad z\in\D, \quad z_n\in\Lambda.
  \end{equation*}
 By applying~\eqref{eq:de2}, we deduce
   \begin{align*}
                                &     \sup_{z\in\D} \, \left| \frac{h_{z_n}''(z)}{h_{z_n}'(z)} \right| \\
     & \qquad \leq  \delta \, \sup_{z\in\D} \,  \frac{|f(\varphi_{z_n}(\delta z))| |A(\varphi_{z_n}(\delta z))|}{|f'(\varphi_{z_n}(\delta z))|}
       \, \frac{\big(1 - |\varphi_{z_n}(\delta z)|^2\big)^2}{1 - |\varphi_{z_n}(\delta z)|^2} \, \frac{1}{1-|\delta z|^2}
       +  \delta \, \sup_{z\in\D} \,  \frac{2 |\overline{z}_n|}{|1-\overline{z}_n \delta z|}\\
       & \qquad \leq  \frac{\delta}{1-\delta} \, \|A\|_{H^\infty_2} \sup_{z\in\Delta(z_n,\delta)} \,  \frac{|f(z)|}{|f'(z)|(1 - |z|^2)} 
       +  \frac{2\delta}{1-\delta}, \quad z_n\in\Lambda.
  \end{align*}
  Theorem~\ref{thm:factorization_h2} implies that $f=Be^g$, where $B$ is an~interpolating Blaschke product
  whose zero-sequence is $\Lambda$ and $g\in\mathcal{B}$ satisfies~\eqref{eq:iprop}. Now
  \begin{equation}  \label{eq:inf}
    \begin{split}
    \sup_{z_n\in\Lambda} \, \sup_{z\in\Delta(z_n,\delta)} \,  \frac{|f(z)|}{|f'(z)|(1 - |z|^2)}
    & =   \sup_{z\in\Delta(z_n,\delta)} \,  \frac{|B(z)|}{|B'(z)+B(z)g'(z)|(1 - |z|^2)} \\
    & \leq  \frac{\delta}{\displaystyle\inf_{z\in\Delta(z_n,\delta)} |B'(z)|(1-|z^2) - \delta \| g\|_{\mathcal{B}}}<\infty
    \end{split}
    \end{equation}
  for any sufficiently small $0<\delta<1$; the infimum in \eqref{eq:inf} is uniformly bounded away from zero for all $z_n\in\Lambda$ by
  standard estimates \cite[Lemma~2]{G:2017_1} as $\Lambda$ is uniform separated.
   We conclude
$\| h_{z_n}''/h_{z_n}' \|_{H^\infty} \lesssim \delta$
uniformly for all $z_n\in\Lambda$.
\end{proof}

The previous proof and Becker's univalence criterion \cite[Korollar 4.1]{B:1972}
imply that for any sufficiently small $0<\delta<1$, function
$h_{z_n}$ is univalent in~$\D$ for any $z_n\in\Lambda$.
Therefore the solution~$f$ is univalent in $\Delta(z_n,\delta)$ for any $z_n\in\Lambda$.


\begin{proof}[Proof of Proposition~\ref{prop:cha}]
  Assume that (i) holds, that is, $X$ is an~admissible linear subspace of $\mathcal{H}(\D)$.
  Let $w\in\H(\D)$ be locally univalent, and define $g=-(1/2)\log w' \in\H(\D)$. If $P_w \in X'$,
  then $g' = -(1/2)P_w \in X'$, and therefore $g\in X$. The admissibility implies
  \begin{equation*}
    (1/2)S_w =  -g''-(g')^2 =  R(g)'' \in X''.
  \end{equation*}
  Conversely, if $S_w\in X''$, then $R(g)'' = (1/2)S_w \in X''$. It follows that $R(g)\in X$. The admissibility implies
  $g\in X$, and therefore $-(1/2)P_w = g'\in X'$. Hence (ii) holds.
 
  Assume that (ii) holds, that is, for any locally univalent $w\in\H(\D)$, $P_w\in X'$ if and only if $S_w\in X''$.
  Let $g\in\H(\D)$, and define $$w(z) = \int_0^z e^{-2g(\zeta)}\, d\zeta , \quad z\in\D.$$ Note that $w\in\H(\D)$ is locally univalent, since
  $w'$ is zero-free. If $g\in X$, then $-(1/2)P_w = g' \in X'$. 
  The assumption implies that $ R(g)'' =  (1/2)S_w \in X''$. Therefore $R(g)\in X$.
  Conversely, if $R(g)\in X$, then $(1/2)S_w = R(g)''\in X''$. The assumption implies
  $g' = -(1/2)P_w \in X'$, and therefore $g\in X$. Hence (i) holds.
\end{proof}


\begin{proof}[Proof of Proposition~\ref{prop:aux}]
Let $X$ be an~admissible linear subspace of $\mathcal{H}(\D)$, and
let $f=e^g$ be a~zero-free solution of~\eqref{eq:de2}.
Assume that $g\in X$.
By the admissibility of $X$, we deduce $R(g)\in X$.
According to a~straight-forward computation, we get
\begin{equation*}
A =  - \frac{f''}{f} = -g'' - (g')^2 = R(g)'' \in X''.
\end{equation*}

Conversely, assume that $A \in X''$. As above,
$R(g)'' = A \in X''$,
and therefore $R(g)\in X$. The admissibility of $X$ implies $g\in X$.
\end{proof}



\begin{proof}[Proof of Theorem~\ref{thm:riccati}]
  Define the auxiliary function $h\in \H(\D)$ by
  \begin{equation*}
  h(z) = -\frac{1}{2} \int_0^z \left( B(\zeta) + \frac{C'(\zeta)}{C(\zeta)} \right) d\zeta, \quad z\in\D.
  \end{equation*}
  The assumptions imply that $h'\in X'$, and therefore $h\in X$. Recall that $g\in\H(\D)$ is a~solution
  of~\eqref{eq:riccati}, and let
\begin{equation*}
f(z) = \exp\!\left( \, \int_0^z \big( -C(\zeta) \, g'(\zeta) + h'(\zeta) \big) \, d\zeta \right),
\quad z\in\D.
\end{equation*}
By a~straight-forward computation involving~\eqref{eq:riccati}, $f$ is a~zero-free solution of $f''+af=0$,
where the coefficient $a=-f''/f$ is of the form
\begin{equation*}
a = AC + \frac{1}{2} \left( B+\frac{C'}{C} \right)' - \frac{1}{4} \left( B+\frac{C'}{C} \right)^2 = AC + R(h)''.
\end{equation*}
We deduce that $a\in X''$ by the assumption and admissibility. By Proposition~\ref{prop:aux}, we conclude that
\begin{equation*}
X' \ni \frac{f'}{f} = -C g' +h'.
\end{equation*}
The assertion $Cg'\in X'$ follows.
\end{proof}


\section{Examples of admissible linear subspaces of $\mathcal{H}(\D)$} \label{sec:examples}

It is immediate that $\H(\D)$ itself is admissible.
By \cite[Theorem~2]{Y:1977} and Proposition~\ref{prop:cha},
the Bloch space $\mathcal{B}$ is admissible.
Conversely, the space $H^\infty_\alpha$
is not admissible for any $0<\alpha<\infty$, since for $g(z)=(1-z)^{-\alpha}$ we have
$g\in H^\infty_\alpha$ while $R(g)\not\in H^\infty_\alpha$.

We turn to consider two non-trivial examples of admissible linear
subspaces of $\H(\D)$. In both cases, we conclude the admissibility by
Proposition~\ref{prop:cha}.


\subsection*{Intersection $\mathcal{B} \cap H^2$}
Suppose that $w\in \H(\D)$ is locally univalent. If $P_w \in (\mathcal{B} \cap H^2)'$,
then the standard estimate
\begin{equation} \label{eq:trivest}
  \big| S_w(z) \big|^2 \lesssim \big| P_w'(z)\big|^2 + \big| P_w(z) \big|^4, \quad z\in\D,
\end{equation}
implies $S_w\in \mathcal{B}''$.
To prove that $S_w \in (H^2)''$, we apply the
Littlewood-Paley identity \cite[Lemma~3.1, p.~236]{G:2007} and write
\begin{align*}
  \| g \|_{H^2}^2 & = |g(0)|^2 + \frac{2}{\pi} \int_{\D} |g'(z)|^2\log\frac{1}{|z|} \, dm(z)\\
  & \lesssim |g(0)|^2 + |g'(0)|^2 + \int_{\D} |g''(z)|^2 (1-|z|^2)^3\, dm(z), \quad g\in\H(\D).
\end{align*}
The assertion $S_w \in (H^2)''$ follows from the previous estimate, since
\begin{align*}
  & \int_{\D} \big| S_w(z) \big|^2 (1-|z|^2)^3 \, dm(z) \\
  & \qquad \lesssim \int_{\D} \big| P_w'(z)\big|^2 (1-|z|^2)^3 \, dm(z)
    + \int_{\D} \big| P_w(z) \big|^4 (1-|z|^2)^3 \, dm(z)\\
  & \qquad \lesssim \big(1+ \| P_w\|_{H^\infty_1}^2\big) \int_{\D} \big| P_w(z) \big|^2 (1-|z|^2) \, dm(z) < \infty.
\end{align*}
Therefore $S_w\in (\mathcal{B} \cap H^2)''$.

Conversely, let
$S_w\in (\mathcal{B} \cap H^2)''$.
By \cite[Theorem~2]{Y:1977}, we have $P_w \in \mathcal{B}'$.
When \cite[Corollary~7]{BJ:1994} is applied to $\log w'$, we deduce
\begin{align*}
\frac{2}{\pi} \int_{\D} \left| P_w(z) \right|^2 \log\frac{1}{|z|}  \, dm(z)
 & = \big\| \log w' - \log w'(0) \big\|_{H^2}^2\\
& \lesssim |w'(0)|^2 + \int_{\D} |S_w(z)|^2(1-|z|^2)^3\, dm(z) < \infty,
\end{align*}
and hence $P_w \in (H^2)'$. Therefore $P_w \in (\mathcal{B} \cap H^2)'$. We conclude that
$\mathcal{B} \cap H^2$ is admissible.



\subsection*{Analytic functions of bounded mean oscillation}

Suppose that $w\in \H(\D)$ is locally univalent. Let $P_w \in {\rm BMOA}'$.
By \eqref{eq:trivest} and \cite[Theorem~3.2]{R:2003}, we compute
\begin{align*}
  & \sup_{a\in\D} \, \int_{\D} \big| S_w(z) \big|^2 (1-|z|^2)^2 (1-|\varphi_a(z)|^2) \, dm(z) \\
  & \qquad \lesssim \sup_{a\in\D} \, \int_{\D} \big| P_w'(z)\big|^2 (1-|z|^2)^2 (1-|\varphi_a(z)|^2) \, dm(z) \\
  &  \qquad \qquad + \sup_{a\in\D} \, \int_{\D} \big| P_w(z) \big|^4 (1-|z|^2)^2 (1-|\varphi_a(z)|^2) \, dm(z)\\
  & \qquad \lesssim \big(1+ \| P_w\|_{H^\infty_1}^2\big) \, \sup_{a\in\D} \, \int_{\D} \big| P_w(z) \big|^2 (1-|\varphi_a(z)|^2) \, dm(z) < \infty,
\end{align*}
and therefore $S_w\in \BMOA''$.

Conversely, let $S_w\in \BMOA''$. By following the proof of \cite[Theorem~2(i)]{GNR:2018},
which relies heavily on \cite[Corollary~7]{BJ:1994},
we conclude that $\log w' \in \BMOA$. Therefore $P_w \in \BMOA'$. We conclude that
$\BMOA$ is admissible.


\end{document}